\newtheorem{theorem}{Theorem}[section]
\newtheorem{e-proposition}[theorem]{Proposition}
\newtheorem{corollary}[theorem]{Corollary}
\newtheorem{e-definition}[theorem]{Definition\rm}
\newtheorem{remark}{\it Remark\/}
\newtheorem{example}{\it Example\/}
\def\og{\leavevmode\raise.3ex\hbox{$\scriptscriptstyle\langle\!\langle$~}}
\def\fg{\leavevmode\raise.3ex\hbox{~$\!\scriptscriptstyle\,\rangle\!\rangle$}}
\newenvironment{proof}
   {{\bf Proof:}}
   {\qed}
\newcommand{\conv}{\operatorname{Conv}}
\journal{the Acad\'emie des sciences}
\begin{document}
\centerline{}
\begin{frontmatter}


\selectlanguage{english}
\title{Bounding Stability Constants for Affinely Parameter-Dependent Operators}


\selectlanguage{english}
\author[authorlabel1]{Robert O'Connor}
\ead{oconnor@aices.rwth-aachen.de}

\address[authorlabel1]{RWTH Aachen University, Aachen, Germany}


\medskip
\begin{center}
{\small Received *****; accepted after revision +++++\\
Presented by £££££}
\end{center}

\begin{abstract}
\selectlanguage{english}
In this article we introduce new possibilities of bounding the stability constants that play a vital role in the reduced basis method.  By bounding stability constants over a neighborhood we make it possible to guarantee stability at more than a finite number of points and to do that in the offline stage. We additionally show that Lyapunov stability of dynamical systems can be handled in the same framework.
{\it To cite this article: R. O'Connor, C. R. Acad. Sci. Paris, Submitted (2016).}

\vskip 0.5\baselineskip

\selectlanguage{francais}
\noindent{\bf R\'esum\'e} \vskip 0.5\baselineskip \noindent
{\bf Des bornes inf\'erieures pour les constantes de stabilit\'e associ\'es \`a des operateurs avec une d\'ependance affine des param\'etres. }
Nous pr\'esentons des nouvelles m\'ethodes pour borner les constantes de stabilit\'e qui jouent un r\^ole essentiel dans les approximations par bases r\'eduites.  Notres m\'ethodes nous permettent de borner les constant dans toute une voisinage et non seulement \`a une numero fini de points.  Nous montrons aussi qu'on peut d\'emontrer la stabilit\'e de Liapounov dans le m\^eme cadre.
{\it Pour citer cet article~: R. O'Connor, C. R. Acad. Sci.
Paris, Soumis (2016).}

\end{abstract}
\end{frontmatter}

\selectlanguage{francais}

\selectlanguage{english}
\section{Introduction}
\label{}

In the reduced basis method, stability constants play two important roles:  They ensure the numerical stability of the problem and they are a critical part of error bounds. Unfortunately in such contexts it is not possible to calculate stability constants for each new parameter value. Instead, lower bounds need to be used.  For the computation of such bounds many methods have been developed \cite{CHM+2009,HRS+2007,Nguyen2005,Veroy2003,VPR+2003}. An important characteristic of these methods is the offline-online decomposition of the workload. The offline stage, which is performed beforehand, is generally very expensive, but the online cost to approximate the stability constant for each new parameter value should be cheap.

Early efforts to bound stability constants \cite{Nguyen2005,VPR+2003} often made use of local information to bound the constants in small regions. By doing that in many small regions it is possible to bound the stability constants everywhere.  Other methods made use of more global information to bound stability constants \cite{CHM+2009,HRS+2007,Veroy2003}.  These methods require the solution of a linear programming problem for each new parameter point and are not suited for bounding stability constants at more than discrete points.  One such method is the successive constraints method (SCM) \cite{CHM+2009,HRS+2007}. SCM has proven to be very efficient when a posteriori error bounds are being calculated, but it is not sufficient when bounds are needed for the entire parameter domain.  That can be the case in real-time applications \cite{OG2016}, where stability and error tolerances need to be ensured beforehand: improving the model in real-time is not possible. 

The main contribution of this article is to show how information can be used more efficiently on a local scale. Our methods can be combined with SCM to bound stability constants over the entire parameter domain in an efficient manner. The same methods can also be used to prove that a system is Lyapunov stable. Compared to normal SCM our method has two advantages: it can reduce the online computational cost of bounding stability constants and, more importantly, it allows us to bound stability constants everywhere in the parameter domain. The disadvantage is that the offline stage can be much more costly than that of SCM.

\section{Problem Statement}\label{sec:state}

Let $\mathcal D\subset\mathbb R^p$ be a bounded parameter domain and let $X$ and $Y$ be Hilbert spaces.  In practice the spaces will often be finite dimensional but the theory that we will present also holds for infinite-dimensional spaces. We consider a parameter-dependent bilinear operator $a(\cdot,\cdot;\mu):X\times Y\rightarrow \mathbb R$ for $\mu\in \mathcal D$ with the following affine decomposition $a(v,w;\mu)=\sum_{q=1}^Q\Theta_q(\mu) a_q(v,w)$.  The affine decomposition separates the operator into parameter-dependent functions $\Theta_q(\cdot):\mathcal D\rightarrow \mathbb R$ and parameter-independent bilinear forms $a_q(\cdot,\cdot):X\times Y\rightarrow \mathbb R$. The efficiency of reduced-basis methods is largely a result of such decompositions \cite{RHP2008,VPR+2003}.  The bilinear operator can be associated with two different stability constants.

\begin{e-definition}\label{def:coercivity}
For a parameter-dependent bilinear operator $a(\cdot,\cdot;\mu)$ we define the inf-sup constant $\beta(\mu)$, and if $X=Y$, we also define the coercivity constant $\alpha(\mu)$.
\begin{equation}\label{eq:coercivity}
\beta(\mu):=\inf_{w\in X}\sup_{v\in Y}\frac{a(w,v;\mu)}{\|w\|_X\|v\|_Y},\hspace{1cm}\alpha(\mu):=\inf_{v\in X}\frac{a(v,v;\mu)}{\|v\|_X^2}
\end{equation}
For a given parameter value $\mu$ we will say that an operator is inf-sup stable (resp. coercive) if $\beta(\mu)>0$ ($\alpha(\mu)>0$).
\end{e-definition}

In this article we consider the problem of finding lower bounds that are valid over the entire parameter domain.  In particular, we will consider two types of problems that have received little attention in this context: (i) proving stability and (ii) computing sharp lower bounds for the stability constants.  Whenever inf-sup constants are needed, they can be reformulated using Riesz representations. The resulting problems can then be handled in much the same way as problems involving coercivity constants \cite{HRS+2007}. We can thus restrict our discussion to coercivity constants and assume that $X=Y$.

We present the first method that is well adapted to the simpler problem of proving stability. For the more complicated problem of estimating stability constants, earlier methods \cite{Nguyen2005,VPR+2003} exist, but we present a significantly more efficient one. In the next section we will review a result from Veroy \cite{Veroy2003} and show how it can be used to locally bound coercivity constants in a more accurate manner.

\section{Simplified Parameter Dependence}\label{sec:simp}

In order to better take advantage of the affine nature of the operator $a(\cdot,\cdot;\mu)$ we will define a simpler bilinear operator $a_\Theta(v,w;\Theta(\mu)):=a(v,w;\mu)$ for all $v,w\in X$ and $\mu\in\mathcal D$.  Here $\Theta(\cdot):\mathcal D\rightarrow \mathbb R^Q$ is defined such that $\Theta(\mu):=[\Theta_1(\mu),\Theta_2(\mu),\dots,\Theta_Q(\mu)]^T$. With the operator $a_\Theta(\cdot,\cdot;\psi)$ we will associate the following affine decomposition and coercivity constant
\begin{equation}\label{eq:theta_affine}
a_\Theta(v,w;\psi)=\sum_{q=1}^Q\psi_q a_q(v,w),\hspace{1cm}\alpha_\Theta(\psi):=\inf_{v\in X}\frac{a_\Theta(v,v;\psi)}{\|v\|_X^2}
\end{equation}
for any $\psi=[\psi_1,\dots,\psi_Q]^T\in\mathbb R^Q$. The following result, which shows the concavity of $\alpha_\Theta(\mu)$, was also proved by Veroy \cite{Veroy2003} but we provide a much simpler proof.

\begin{theorem}\label{thm:concavity}
Let $a_\Theta(\cdot,\cdot;\psi)$ be an operator with an affine parameter dependence of the form given in (\ref{eq:theta_affine}).  The coercivity constant $\alpha_\Theta(\psi)$ associated with $a_\Theta(\cdot,\cdot;\psi)$ is a concave function of $\psi\in\mathbb R^Q$.
\end{theorem}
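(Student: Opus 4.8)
The plan is to recognize $\alpha_\Theta$ as a pointwise infimum of a family of linear (hence concave) functions of $\psi$, and then to invoke the standard fact that such an infimum is concave. First I would fix an arbitrary $v\in X$ with $v\neq 0$ and consider the map
\[
f_v:\mathbb R^Q\to\mathbb R,\qquad f_v(\psi):=\frac{a_\Theta(v,v;\psi)}{\|v\|_X^2}=\sum_{q=1}^Q\psi_q\,\frac{a_q(v,v)}{\|v\|_X^2}.
\]
Because each $a_q$ and the norm are parameter-independent, the coefficients $a_q(v,v)/\|v\|_X^2$ are fixed real numbers, so $f_v$ is a linear function of $\psi$; equivalently, normalizing $\|v\|_X=1$ by homogeneity, $f_v(\psi)=\sum_q\psi_q a_q(v,v)$ is plainly linear. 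In particular each $f_v$ is concave. By (\ref{eq:theta_affine}), $\alpha_\Theta(\psi)=\inf_{v\neq 0}f_v(\psi)$, an infimum of concave functions.

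Second, I would record the elementary fact that a pointwise infimum of concave functions is concave, which here is immediate. For $\psi,\phi\in\mathbb R^Q$ and $\lambda\in[0,1]$, linearity of each $f_v$ gives $f_v(\lambda\psi+(1-\lambda)\phi)=\lambda f_v(\psi)+(1-\lambda)f_v(\phi)\ge \lambda\,\alpha_\Theta(\psi)+(1-\lambda)\,\alpha_\Theta(\phi)$, where the inequality uses $f_v(\psi)\ge\alpha_\Theta(\psi)$ and $f_v(\phi)\ge\alpha_\Theta(\phi)$. Taking the infimum over $v\neq 0$ on the left-hand side then yields $\alpha_\Theta(\lambda\psi+(1-\lambda)\phi)\ge\lambda\,\alpha_\Theta(\psi)+(1-\lambda)\,\alpha_\Theta(\phi)$, which is exactly the concavity statement.

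The proof is short precisely because the affine decomposition makes the quotient linear in $\psi$ for each fixed $v$; this is the step that replaces Veroy's more involved argument. The only point requiring any care is that the infimum may equal $-\infty$ for some $\psi$. I would handle this by reading concavity in the extended-real-valued sense (the displayed inequality remains valid under the usual conventions for $-\infty$), or equivalently by restricting attention to the convex set on which $\alpha_\Theta$ is finite. No compactness, finite-dimensionality, or regularity of $X$ is invoked, so the argument covers the infinite-dimensional setting as claimed.
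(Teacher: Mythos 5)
Your proof is correct and is essentially the same as the paper's: the paper writes $\alpha_\Theta(\psi)=\inf\{\psi^Ty\,|\,y\in\mathcal Y\}$ with $\mathcal Y$ the image of $v\mapsto\bigl(a_q(v,v)/\|v\|_X^2\bigr)_{q=1}^Q$, which is exactly your family $\{f_v\}$ indexed through its image, and then applies the same superadditivity-of-infima inequality you spell out. Your remark about the extended-real-valued case is a small additional care point the paper omits, but it does not change the argument.
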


\begin{proof}
We begin by defining the set $\mathcal Y:=\{y\in\mathbb R^Q|y_q=a_q(v,v)/\|v\|_X^2, \forall 1\leq q\leq Q$ and some $v\in X\}$, where $y_q$ is the $q$\textsuperscript{th} element of $y\in\mathbb R^Q$.  We can then write the coercivity constant $\alpha_\Theta(\psi)$ as the solution to the minimization problem $\alpha_\Theta(\psi)=\inf\{\psi^Ty|y\in\mathcal Y\}$ \cite{HRS+2007}.  For any $\eta,\rho\in\mathbb R^Q$ and $\tau\in[0,1]$ it holds that $\alpha_\Theta(\tau\eta+(1-\tau)\rho)=\inf_{y\in\mathcal Y}(\tau\eta+(1-\tau)\rho)^Ty\geq\tau\left(\inf_{y\in\mathcal Y}\eta^Ty\right)+(1-\tau)\left(\inf_{y\in\mathcal Y}\rho^Ty\right)=\tau\alpha_\Theta(\eta)+(1-\tau)\alpha_\Theta(\rho)$, which is the definition of concavity for $\alpha_\Theta(\psi)$.
\end{proof}

If we are interested in proving the stability of the operator $a_\Theta(\cdot,\cdot;\psi)$ over a given set of parameters, we can use the following corollary of theorem \ref{thm:concavity}.

\begin{corollary}\label{cor:convex_hull}
Assume that $a_\Theta(\cdot,\cdot;\psi)$ is an operator of the form given in (\ref{eq:theta_affine}).  For any set $\Psi$ of points in $\mathbb R^Q$ it holds that $\min\{\alpha_\Theta(\psi)|\psi\in\conv(\Psi)\}=\min\{\alpha_\Theta(\eta)|\eta\in\Psi\}$, where $\conv(\Psi)$ denotes the convex hull of $\Psi$.
\end{corollary}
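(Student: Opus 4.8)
The plan is to exploit concavity of $\alpha_\Theta$ (Theorem \ref{thm:concavity}) together with the fact that a concave function attains its minimum over a convex polytope at an extreme point. First I would observe that $\conv(\Psi)\supseteq\Psi$, so trivially $\min\{\alpha_\Theta(\psi)\mid\psi\in\conv(\Psi)\}\leq\min\{\alpha_\Theta(\eta)\mid\eta\in\Psi\}$. The real content is the reverse inequality: I must show that the minimum of $\alpha_\Theta$ over the convex hull is never smaller than its minimum over the generating set $\Psi$ itself.

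For the reverse inequality I would take any point $\psi\in\conv(\Psi)$ and write it, via the definition of the convex hull, as a finite convex combination $\psi=\sum_{i=1}^m\tau_i\eta_i$ with $\eta_i\in\Psi$, $\tau_i\geq 0$, and $\sum_i\tau_i=1$. Applying concavity of $\alpha_\Theta$ (extended from the two-point definition in Theorem \ref{thm:concavity} to finitely many points by a routine induction, or simply by Jensen's inequality for concave functions) yields
\begin{equation}
\alpha_\Theta(\psi)=\alpha_\Theta\Bigl(\sum_{i=1}^m\tau_i\eta_i\Bigr)\geq\sum_{i=1}^m\tau_i\,\alpha_\Theta(\eta_i)\geq\sum_{i=1}^m\tau_i\min_{\eta\in\Psi}\alpha_\Theta(\eta)=\min_{\eta\in\Psi}\alpha_\Theta(\eta).
\end{equation}
Since this holds for every $\psi\in\conv(\Psi)$, taking the infimum over $\psi\in\conv(\Psi)$ gives $\min\{\alpha_\Theta(\psi)\mid\psi\in\conv(\Psi)\}\geq\min_{\eta\in\Psi}\alpha_\Theta(\eta)$. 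Combined with the trivial inequality this forces equality.

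The step I expect to require the most care is ensuring that the minima are actually attained and that the argument is not circular about existence. Since $\Psi$ need not be finite, $\min_{\eta\in\Psi}\alpha_\Theta(\eta)$ and $\min_{\psi\in\conv(\Psi)}\alpha_\Theta(\psi)$ should perhaps be read as infima unless compactness is assumed; the inequality chain above is valid for infima verbatim, so I would phrase the whole argument with $\inf$ in place of $\min$ to stay safe, noting that when $\Psi$ is finite (the case of interest for the numerical method) the infima are genuine minima. A secondary subtlety is the passage from two-point to $m$-point concavity, which I would either cite as the standard Jensen inequality or dispatch with a one-line induction on $m$, so this is a minor point rather than a genuine obstacle.
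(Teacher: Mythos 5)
Your proof is correct and follows exactly the route the paper intends: the corollary is stated as an immediate consequence of Theorem \ref{thm:concavity}, with the minimum-over-the-hull equality obtained by writing any point of $\conv(\Psi)$ as a convex combination and applying concavity (Jensen), which is precisely your argument. Your remark about reading the minima as infima when $\Psi$ is not finite is a sensible refinement of the paper's statement, but it does not change the substance of the argument.
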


Sharper bounds can be built using interpolation on simplexes.

\begin{corollary}\label{cor:interp}
Let $\Psi=\{\eta^i|1\leq i \leq m\leq Q+1\}\subset\mathbb R^Q$ be a set of $m$ points such that the dimension of $\conv(\Psi)$ is exactly $m-1$.  For all $\psi\in\conv(\Psi)$, unique interpolation coefficients $c_i(\psi)\in[0,1]$ are defined such that $\psi=\sum_{q=1}^mc_q(\psi)\eta^q$ and $1=\sum_{q=1}^mc_q(\psi)$. It then holds that $\alpha_\Theta(\psi)\geq\sum_{q=1}^mc_q(\psi)\alpha_\Theta(\eta^q)$ for all $\psi\in\conv(\Psi)$.
\end{corollary}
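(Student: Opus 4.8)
The plan is to recognize this corollary as Jensen's inequality for the concave function $\alpha_\Theta$, so that almost all of the work has already been done in Theorem \ref{thm:concavity}. First I would dispose of the well-definedness of the coefficients. The hypothesis that $\conv(\Psi)$ has dimension exactly $m-1$ is precisely the statement that the $m$ points $\eta^1,\dots,\eta^m$ are affinely independent. Affine independence guarantees that the barycentric-coordinate system $\psi=\sum_{q=1}^m c_q(\psi)\eta^q$ together with $\sum_{q=1}^m c_q(\psi)=1$ has a unique solution for every $\psi$ in the affine hull of $\Psi$, and that this solution satisfies $c_q(\psi)\in[0,1]$ for all $q$ exactly when $\psi\in\conv(\Psi)$. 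This justifies the assertion in the statement that the $c_q(\psi)$ are well-defined and unique.

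For the inequality itself, the cleanest route is to mirror the proof of Theorem \ref{thm:concavity} directly, using the representation $\alpha_\Theta(\psi)=\inf\{\psi^T y\mid y\in\mathcal Y\}$ established there. Substituting $\psi=\sum_{q=1}^m c_q(\psi)\eta^q$, using $c_q(\psi)\geq 0$ together with the fact that the infimum of a sum is at least the sum of the infima, I would write
$$\alpha_\Theta(\psi)=\inf_{y\in\mathcal Y}\sum_{q=1}^m c_q(\psi)\,(\eta^q)^Ty\geq\sum_{q=1}^m c_q(\psi)\inf_{y\in\mathcal Y}(\eta^q)^Ty=\sum_{q=1}^m c_q(\psi)\,\alpha_\Theta(\eta^q),$$
which is exactly the desired bound. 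Alternatively, one obtains the same result by induction on $m$, applying the two-point concavity inequality from Theorem \ref{thm:concavity} repeatedly after regrouping $\sum_{q=1}^m c_q(\psi)\eta^q$ as a convex combination of $\eta^1$ and the (rescaled) convex combination of the remaining points; this is the standard derivation of Jensen's inequality from the two-point definition of concavity.

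I do not expect any serious obstacle here, since the corollary is an essentially immediate consequence of the concavity already proved. The only point that requires a little care is the well-definedness and uniqueness of the interpolation coefficients, which is where the dimension hypothesis $\dim(\conv(\Psi))=m-1$ is genuinely used: without affine independence the coefficients would either fail to exist for some $\psi$ or fail to be unique, and the statement would have to be reinterpreted. Once that is settled, the inequality follows in a single line from either the infimum representation or the inductive Jensen argument.
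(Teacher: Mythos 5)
Your proposal is correct and matches the paper's intent: the paper states this as an unproved corollary of Theorem \ref{thm:concavity}, i.e.\ as Jensen's inequality for the concave function $\alpha_\Theta$, which is exactly what both of your arguments deliver (your primary route via $\alpha_\Theta(\psi)=\inf\{\psi^Ty\mid y\in\mathcal Y\}$ is just the theorem's own proof extended from two points to $m$ points). Your additional care about affine independence and the well-definedness of the barycentric coordinates $c_q(\psi)$ is a sensible supplement to what the paper leaves implicit.
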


Given the set of points $\Psi$ from corollary \ref{cor:interp} it is also possible to extrapolate the values of $\alpha_\Theta(\psi)$.  This can be done to derive upper bounds for $\alpha_\Theta(\psi)$ over certain parts of $\mathbb R^Q$. Deriving upper bounds in this way is convenient because it requires only the information that is already needed for the lower bounds.

In some situations $\alpha_\Theta(\psi)$ is affine over a one-dimensional interval.  Understanding such situations can be useful in constructing and understanding bounds.  In particular, this phenomenon explains some of our numerical results.

\begin{theorem}\label{thm:linear}
Assume that $\|v\|_X^2=a_\Theta(v,v;\bar\psi)$ for some $\bar\psi\in\mathbb R^Q$ and that $a_\Theta(\cdot,\cdot;\psi)$ has the form given in (\ref{eq:theta_affine}), then $\alpha_\Theta(\bar\psi+\tau\rho)=1+\tau\alpha_\Theta(\rho)$ for all $\tau>0$ and $\rho\in\mathbb R^Q$.
\end{theorem}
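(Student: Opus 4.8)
The plan is to exploit the linearity of the affine decomposition (\ref{eq:theta_affine}) in its parameter argument. First I would observe that, because $a_\Theta(v,v;\psi)=\sum_{q=1}^Q\psi_q a_q(v,v)$ depends linearly on $\psi$, for every fixed $v\in X$ the numerator splits as
\[
a_\Theta(v,v;\bar\psi+\tau\rho)=a_\Theta(v,v;\bar\psi)+\tau\,a_\Theta(v,v;\rho).
\]

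Next I would substitute the hypothesis $\|v\|_X^2=a_\Theta(v,v;\bar\psi)$ into this identity, so that the numerator becomes $\|v\|_X^2+\tau\,a_\Theta(v,v;\rho)$. Dividing by $\|v\|_X^2$ and inserting the result into the definition of the coercivity constant from (\ref{eq:theta_affine}) yields
\[
\alpha_\Theta(\bar\psi+\tau\rho)=\inf_{v\in X}\left(1+\tau\,\frac{a_\Theta(v,v;\rho)}{\|v\|_X^2}\right).
\]

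The final step is to pull the constant term and the factor $\tau$ through the infimum. Since the summand $1$ does not depend on $v$, and since $\tau>0$, the infimum distributes to give $1+\tau\inf_{v\in X}\frac{a_\Theta(v,v;\rho)}{\|v\|_X^2}=1+\tau\,\alpha_\Theta(\rho)$, which is precisely the claim. The only point that demands any care is the sign assumption $\tau>0$: it is exactly what guarantees that $\inf_{v}\tau\,g(v)=\tau\inf_{v}g(v)$ rather than turning the infimum into a supremum. Beyond this observation no genuine obstacle arises, and the whole argument reduces to a one-line computation once the linearity of $a_\Theta$ in $\psi$ has been used; this is why I expect the statement to follow much more directly than the interpolation-based corollaries above.
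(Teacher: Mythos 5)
Your proof is correct and follows exactly the route the paper intends: the paper's own proof is just the one-line remark that the result is ``straightforward using the definition of $\alpha_\Theta(\cdot)$ and the linearity of $a_\Theta(\cdot,\cdot;\psi)$ in $\psi$,'' which is precisely the computation you carry out. You have simply filled in the details, including the correct observation that $\tau>0$ is what allows the factor $\tau$ to pass through the infimum.
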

\begin{proof}
The proof is straight forward using the definition of $\alpha_\Theta(\cdot)$ and the linearity of $a_\Theta(\cdot,\cdot;\psi)$ in $\psi$.
\end{proof}

\section{Bounding Coercivity Constants}\label{sec:bound}

Noting that $\alpha(\mu)=\alpha_\Theta(\Theta(\mu))$ for all $\mu\in\mathcal D$ we can use the results from the last section to derive bounds for $\alpha(\mu)$. Let us consider a small example of our method.
\begin{example}\label{exam}
Let $\mathcal D=[0,1]$, and consider the operators $a(v,w):=a_0(v,w)+\mu a_1(v,w)+\mu^2 a_2(v,w)$ and $a_\Theta(v,w;\psi):=\psi_0a_0(v,w)+\psi_1 a_1(v,w)+\psi_2 a_2(v,w)$.  We define the points $\eta^1=[1,0,0]^T$, $\eta^2=[1,1,0]^T$, and $\eta^3=[1,1,1]^T$ and the set $\Psi=\{\eta^1,\eta^2,\eta^3\}$ such that $\Theta(\mathcal D)=\{[1,\mu,\mu^2]^T|0\leq \mu \leq 1\}\subset\conv(\Psi)$. From corollary \ref{cor:interp} we know that $\alpha_\Theta(\psi)\geq(1-\psi_1)\alpha_\Theta(\eta^1)+(\psi_1-\psi_2)\alpha_\Theta(\eta^2)+\psi_2\alpha_\Theta(\eta^3)$ for all $\psi\in\conv(\Psi)$.  The equivalent result in $\mathcal D$ is given by $\alpha(\mu)\geq(1-\mu)\alpha_\Theta(\eta^1)+(\mu-\mu^2)\alpha_\Theta(\eta^2)+\mu^2\alpha_\Theta(\eta^3)$.
\end{example}

Earlier methods to locally bound stability constants build concave lower bounds for $\alpha(\mu)$ \cite{Nguyen2005,VPR+2003}.  Unfortunately, those bounds can be quite pessimistic. Taking advantage of the natural concavity of the operator in $\mathbb R^Q$, as we propose, should produce sharper bounds and make our algorithms more efficient. We will now show how these ideas can be used with larger parameter domains.

\subsection{Ensuring Stability}

We begin with the problem of ensuring the coercivity of $a(\cdot,\cdot;\mu)$ for all $\mu\in\mathcal D$. Due to the concave nature of $\alpha_\Theta(\psi)$ it suffices to prove coercivity for all $\psi\in\Gamma$, where $\Gamma$ denotes the boundary of $\Theta(\mathcal D)$. We start by choosing a finite set of points $\Psi\subset\mathbb R^Q$ such that $\Theta(\mathcal D)\subset \conv(\Psi)$ and calculating $\alpha_\Theta(\eta)$ for all $\eta\in\Psi$.  Corollary \ref{cor:convex_hull} tells us that we are done if $\alpha_\Theta(\psi)>0$ for all $\psi\in\Psi$. If that is not the case, we can make use of the more powerful corollary \ref{cor:interp}. We construct a set of non-overlapping simplexes that cover $\Gamma$ and have vertices in $\Psi$.  We can then interpolate bounds onto $\Gamma$.  The point $\psi\in\Gamma$ where the bound is the smallest is then added to $\Psi$ and we compute the coercivity constant there.  The simplexes are then refined and the bounds improved.  The process continues until either coercivity is proven for all $\psi\in\Gamma$, or a point $\psi\in\Gamma$ is found such that $\alpha_\Theta(\psi)\leq0$, which proves that the problem is not stable.

\subsection{Calculating Sharp Lower Bounds}

Another problem that we can handle is that of computing sharp lower bounds everywhere in $\mathcal D$. To do that we cover $\Theta(\mathcal D)\in\mathbb R^Q$ with a mesh of non-overlapping simplexes, calculate the value of $\alpha_\Theta(\psi)$ at each vertex $\psi$, and build interpolated lower bounds using corollary \ref{cor:interp}.

This basic idea can be made more efficient in two ways: we build the simplex mesh adaptively and use SCM. Rather than working with a predetermined simplex mesh it will usually be more efficient to build the mesh adaptively.  That reduces the number of vertices that are needed to achieve a certain tolerance. The idea of such an adaptive methods is to refine the mesh where the approximation error is too large.  In this case we compute lower and upper bounds for the coercivity in a simplex and use the difference to measure the accuracy.  Wherever the difference exceeds a certain tolerance we refine the mesh. To reduce the computational cost associated with each vertex we can use SCM to bound the stability constants at each vertex.

Veroy \cite{Veroy2003} made use of the same concavity but used it in a very different way.  Her method uses overlapping simplexes, and the one that produces the best bound is found using a linear program.  As is the case with SCM, the use of linear programming means that the method can only be used for discrete parameter values. In comparison our method, combined with SCM, produces much sharper bounds while allowing us to bound the constants everywhere.

The offline cost of our method will be significantly higher than that of using only SCM. In particular our method can become very expensive if $p$ is not reasonably small.  If in addition $Q>p$, as in example \ref{exam}, our method also has the disadvantage that we are working in a space with higher dimensionality than the parameter domain $\mathcal D$.

An advantage of our method is that it reduces the computational cost of the online stage.  Computing a lower bound with SCM requires searching for parameter points from a predetermined list, constructing a linear programming problem and solving it.  Using our method the online cost is just that of identifying the associated simplex and either interpolating (with corollary \ref{cor:interp}) or choosing the minimum value (with corollary \ref{cor:convex_hull}).  We note, however, that this method only affects the cost of evaluating the stability constant.  In the context of the reduced basis method it is generally also necessary to compute the reduced solution and the residual in the online stage.  Those computations may dominate the online computational cost. Avoiding those computations in the online stage requires the offline evaluation of error bounds. That is also necessary for real-time applications \cite{OG2016}.

\section{Lyapunov Stability}
In previous sections and in the field of reduced-basis modeling, ``stability'' has meant numerical stability, but in the area of control, which is often cited as an application area, there is a great interest in stability in the sense of Lyapunov.  In this section we review Lyapunov stability theory for linear systems and show how we can prove stability for parameter-dependent systems.  Here we assume that $X=Y$ is finite dimensional.

Let us consider a dynamical system of the form:
\begin{equation}\label{eq:system}
\left\langle \dot y(t),v\right\rangle_V=-a\left(y(t),v;\mu\right),\hspace{.25cm} \forall v\in X \text{ and }\forall t\geq0.
\end{equation}
Here $\langle \cdot,\cdot\rangle_V$ denotes an inner product on $X$ and will usually be associated with the mass matrix.  The state of the system is given by $y\in C^1([0,T];X)$ and its time derivative is denoted $\dot y$.

For a symmetric operator $a(\cdot,\cdot;\mu)$ the value of $\alpha(\mu)$ determines if the system is Lyapunov stable.

\begin{theorem}\label{thm:symmetric}
Let us assume that $a(\cdot,\cdot;\mu)$ is a symmetric operator.  The system (\ref{eq:system}) is Lyapunov stable for the parameter $\mu$ iff $\alpha(\mu)\geq 0$.  It is asymptotically stable if $\alpha(\mu)> 0$.
\end{theorem}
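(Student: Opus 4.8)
The plan is to pass to matrices and reduce the statement to a spectral condition on the generalized eigenvalue problem associated with the pencil $(A,M)$. Since $X$ is finite-dimensional, fix a basis and represent $\langle\cdot,\cdot\rangle_V$ by a symmetric positive-definite matrix $M$ and $a(\cdot,\cdot;\mu)$ by a matrix $A$, which is symmetric because $a$ is. Testing (\ref{eq:system}) against every $v\in X$ shows the weak equation is equivalent to the linear ODE $M\dot y=-Ay$, i.e. $\dot y=-M^{-1}Ay$. The whole question thus reduces to the classical stability theory of the constant-coefficient system $\dot y=By$ with $B=-M^{-1}A$.

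First I would diagonalize the symmetric-definite pencil. Writing $M=LL^{T}$ and conjugating, $M^{-1}A$ is similar to the symmetric matrix $L^{-1}AL^{-T}$; hence there is an $M$-orthonormal basis of generalized eigenvectors $v_i$ with $Av_i=\lambda_i Mv_i$ and all $\lambda_i\in\mathbb R$. The eigenvalues of $B$ are exactly $\{-\lambda_i\}$, all real, and $B$ is diagonalizable. The general solution is $y(t)=\sum_i c_i e^{-\lambda_i t}v_i$, so the behavior is governed entirely by the signs of the $\lambda_i$.

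Next I would translate the coercivity condition into the inertia of $A$. Since $\|\cdot\|_X$ is a genuine norm, the sign of the Rayleigh-type quotient $a(v,v;\mu)/\|v\|_X^2$ equals the sign of $v^TAv$, so (using compactness of the unit sphere to attain the infimum) $\alpha(\mu)>0$ iff $A$ is positive definite and $\alpha(\mu)\ge 0$ iff $A$ is positive semidefinite. By Sylvester's law of inertia the congruence $A\mapsto L^{-1}AL^{-T}$ preserves signs of eigenvalues, so $A$ positive definite $\iff$ all $\lambda_i>0$ and $A$ positive semidefinite $\iff$ all $\lambda_i\ge0$. Combining with the explicit solution: all $-\lambda_i<0$ (i.e. $\alpha(\mu)>0$) forces $y(t)\to 0$ for every initial state, which is asymptotic stability; all $-\lambda_i\le 0$ (i.e. $\alpha(\mu)\ge0$) keeps every mode bounded, giving Lyapunov stability; and if $\alpha(\mu)<0$ some $\lambda_i<0$ yields a mode $e^{-\lambda_i t}v_i$ that grows without bound, destroying stability. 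This gives both the equivalence and the sufficient condition for asymptotic stability.

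The delicate point — and the reason symmetry of $a$ is assumed — is the borderline case $\alpha(\mu)=0$. There $A$ is only semidefinite, $B$ has a nontrivial kernel, and Lyapunov stability would fail if a zero eigenvalue were defective (producing secular $t\,v_i$ growth). The main obstacle is therefore to rule this out; symmetry does exactly that, since the pencil diagonalization guarantees every zero eigenvalue of $B$ is semisimple, so the null modes stay constant rather than growing. As a cross-check one can run the energy argument: with $E(t)=\tfrac12\|y(t)\|_V^2$ one gets $\dot E=\langle\dot y,y\rangle_V=-a(y,y;\mu)\le 0$ whenever $\alpha(\mu)\ge0$, confirming boundedness directly, while after a norm-equivalence step relating $\|\cdot\|_X$ and $\|\cdot\|_V$ one obtains $\dot E\le -c\,\alpha(\mu)\|y\|_V^2$, giving exponential decay when $\alpha(\mu)>0$.
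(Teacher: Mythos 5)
Your proof is correct, but there is no proof in the paper to compare it against: the paper states theorem \ref{thm:symmetric} without any argument, treating it as a classical fact about linear constant-coefficient systems, and moves directly on to the nonsymmetric case. Judged on its own merits, your argument is complete and well organized. The spectral part --- reducing the weak form to $\dot y=-M^{-1}Ay$, using the Cholesky factorization $M=LL^T$ to see that $M^{-1}A$ is similar to the symmetric matrix $L^{-1}AL^{-T}$, and invoking Sylvester's law of inertia to tie the signs of the generalized eigenvalues $\lambda_i$ to the sign of $\alpha(\mu)$ --- is exactly what delivers the necessity direction: if $\alpha(\mu)<0$ then some $\lambda_i<0$ and the mode $e^{-\lambda_i t}v_i$ grows without bound from arbitrarily small initial data. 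You also correctly isolate semisimplicity of the zero eigenvalue as the precise point where symmetry is indispensable; without it a defective zero eigenvalue would produce secular growth $t\,v_i$ in the borderline case $\alpha(\mu)=0$. For the two sufficiency claims, the energy identity you offer as a cross-check, $\frac{d}{dt}\tfrac12\|y(t)\|_V^2=-a(y(t),y(t);\mu)\le-\alpha(\mu)\|y(t)\|_X^2$, is by itself the shortest self-contained route: it gives $\|y(t)\|_V\le\|y(0)\|_V$ when $\alpha(\mu)\ge0$, and with finite-dimensional norm equivalence and Gronwall's inequality it gives exponential decay when $\alpha(\mu)>0$. Either ingredient alone would leave a gap (the energy argument cannot prove necessity, the naive spectral argument needs the semisimplicity observation); your combination covers all three claims of the theorem.
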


For nonsymmetric operators the situation is more complicated.  It is very common to use eigenvalues to classify stable systems, but that is not practical in our context:  there is no way of rigorously bounding the eigenvalues of nonsymmetric operators in an offline/online manner. Instead, we will make use of Lyapunov functions.  The following theorem gives a classical result and a connection to a new coercivity constant.

\begin{theorem}\label{thm:lyapunov}
For a fixed parameter $\mu$ the system in (\ref{eq:system}) is stable in the sense of Lyapunov iff there exists a symmetric, coercive bilinear operator $p(\cdot,\cdot)$ such that $\phi(v,w;\mu):=p(T_\mu v,w)+p(v,T_\mu w)$ is also coercive.  Here $T_\mu$ is the supremizing operator defined by $\langle T_\mu w,v\rangle_V=a(w,v;\mu)$ for all $w,v\in X$.
\end{theorem}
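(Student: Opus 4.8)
The plan is to reduce the abstract system (\ref{eq:system}) to a finite-dimensional linear ordinary differential equation and then recognize the hypothesis on $p$ and $\phi$ as the classical matrix Lyapunov inequality.

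First I would rewrite (\ref{eq:system}) using the supremizing operator. Since $\langle \dot y(t),v\rangle_V = -a(y(t),v;\mu) = -\langle T_\mu y(t),v\rangle_V$ holds for every $v\in X$ and $\langle\cdot,\cdot\rangle_V$ is nondegenerate, the system is equivalent to the linear ODE $\dot y(t)=-T_\mu y(t)$. Because $X$ is finite dimensional, Lyapunov stability of (\ref{eq:system}) therefore coincides with Lyapunov stability of this linear system, to which classical theory applies. Next I would translate the bilinear conditions into operator language: by Riesz representation there is a linear operator $P:X\to X$ with $p(v,w)=\langle Pv,w\rangle_V$, where symmetry of $p$ makes $P$ self-adjoint with respect to $\langle\cdot,\cdot\rangle_V$ and coercivity makes it positive definite. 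A direct computation then gives $\phi(v,v;\mu) = \langle (PT_\mu + T_\mu^{*}P)v,v\rangle_V$, with $T_\mu^{*}$ the $\langle\cdot,\cdot\rangle_V$-adjoint of $T_\mu$. Hence ``$p$ and $\phi$ are both coercive'' is exactly the assertion that there is a positive-definite self-adjoint $P$ for which $PT_\mu + T_\mu^{*}P$ is positive definite, i.e.\ that the Lyapunov operator $(-T_\mu)^{*}P + P(-T_\mu)$ is negative definite for the system matrix $A:=-T_\mu$.

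With this reformulation the two implications are the two halves of the classical Lyapunov theorem. For the ``if'' direction I would use the candidate $V(y):=p(y,y)=\langle Py,y\rangle_V$, which is positive definite since $p$ is coercive; differentiating along trajectories and using $\dot y=-T_\mu y$ gives $\tfrac{d}{dt}V(y(t)) = -\phi(y(t),y(t);\mu)$, which is strictly negative away from the origin, so $V$ is a strict Lyapunov function and the system is (asymptotically, and in particular Lyapunov) stable. For the converse I would construct $P$ explicitly via the classical formula $P=\int_0^{\infty} e^{-T_\mu^{*}t}\,Q\,e^{-T_\mu t}\,dt$ for an arbitrary positive-definite $Q$: stability guarantees convergence of the integral, $P$ is positive definite and self-adjoint, and differentiating under the integral sign yields $T_\mu^{*}P + PT_\mu = Q$, so $\phi$ is coercive.

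I expect the converse to be the main obstacle, since it requires producing the Lyapunov operator $p$ from the stability hypothesis alone; the decisive technical points are the convergence of the integral and the verification of the resulting Lyapunov equation, both of which rest on the finite-dimensionality assumed here. A related subtlety that I would watch carefully is matching the strictness of coercivity to the correct notion of stability: coercivity of $\phi$ corresponds to a \emph{strict} Lyapunov inequality, so the equivalence should be read together with the asymptotic/marginal distinction already recorded in Theorem \ref{thm:symmetric}, and I would rely on the classical converse Lyapunov construction to handle this bookkeeping cleanly.
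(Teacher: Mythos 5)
The paper never actually proves this theorem --- it is introduced with the sentence ``The following theorem gives a classical result and a connection to a new coercivity constant'' and no argument is supplied --- so you are filling a gap rather than paralleling an existing proof, and the route you chose (reduce to $\dot y(t)=-T_\mu y(t)$, represent $p(v,w)=\langle Pv,w\rangle_V$, identify $\phi(v,w;\mu)=\langle (PT_\mu+T_\mu^{*}P)v,w\rangle_V$, and invoke the matrix Lyapunov theorem) is the natural one. Your ``if'' direction is correct as written: $V(y)=p(y,y)$ is a strict Lyapunov function with $\tfrac{d}{dt}V(y(t))=-\phi(y(t),y(t);\mu)$, and it in fact yields asymptotic stability, hence Lyapunov stability.

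The genuine gap is in your converse, at the step ``stability guarantees convergence of the integral.'' It does not: for positive definite $Q$, the integral $P=\int_0^{\infty}e^{-T_\mu^{*}t}Qe^{-T_\mu t}\,dt$ converges if and only if $-T_\mu$ is Hurwitz, i.e.\ if and only if the system is \emph{asymptotically} stable. The paper's own Theorem \ref{thm:symmetric} carefully separates Lyapunov stability ($\alpha(\mu)\geq 0$) from asymptotic stability ($\alpha(\mu)>0$), and under that marginal reading the converse you are trying to prove is actually false. The cleanest counterexample is $T_\mu=0$ (i.e.\ $a(\cdot,\cdot;\mu)\equiv 0$): the system $\dot y=0$ is Lyapunov stable, yet $\phi(v,w;\mu)=p(T_\mu v,w)+p(v,T_\mu w)\equiv 0$ for \emph{every} choice of $p$, so no coercive $\phi$ exists. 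More generally, if $T_\mu$ has a purely imaginary eigenvalue $\lambda$ with complexified eigenvector $x\neq 0$ --- which Lyapunov stability permits --- then the Hermitian extension of the quadratic form satisfies $\phi(x,x;\mu)=2\,\mathrm{Re}(\lambda)\,\langle Px,x\rangle_V=0$, so $\phi$ cannot be coercive for any coercive symmetric $p$. Consequently, your closing plan to ``rely on the classical converse Lyapunov construction to handle this bookkeeping'' cannot succeed: no construction exists in the marginal case. What your argument correctly proves is the equivalence of the existence of coercive $p,\phi$ with \emph{asymptotic} stability; the strictness mismatch you flagged is not bookkeeping but a defect of the statement itself, which, read literally, even contradicts Theorem \ref{thm:symmetric} for symmetric $a$ with $\alpha(\mu)=0$. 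The honest fix is to replace ``stable in the sense of Lyapunov'' by ``asymptotically stable'' in the theorem (or to weaken coercivity of $\phi$ to positive semidefiniteness), and then your proof goes through.
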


\begin{remark}
If the operator $a(\cdot,\cdot;\mu)$ is coercive for a particular parameter $\mu\in\mathcal D$, we can choose $p(v,w)=\langle v,w\rangle_V$.  We then get $\phi(v,w;\mu)=\langle T_\mu v,w\rangle_V+\langle v,T_\mu w\rangle_V=a(v,w;\mu)+a(w,v;\mu)$.  Theorem \ref{thm:lyapunov} and the assumption that $a(\cdot,\cdot;\mu)$ is coercive tell us that the system is Lyapunov stable.
\end{remark}

We will consider a fixed $p(\cdot,\cdot)$ and investigate the stability of the system for a range of parameter values. By showing that $\phi(\cdot,\cdot;\mu)$ is coercive we can prove that (\ref{eq:system}) is Lyapunov stable. Although the operator $p(\cdot,\cdot)$ is independent of $\mu$, the parameter dependence of $a(\cdot,\cdot;\mu)$ will induce an affine parameter dependence in $\phi(\cdot,\cdot;\mu)$. That allows us to use the theory from previous sections to ensure stability.  An effective method to construct the operator $p(\cdot,\cdot)$ was introduced by O'Connor \cite{OConnor2016}.

\section{Numerical Example: Diffusion-Convection-Reaction Equation}

We will consider a one-dimensional spatial domain $\Omega=(0,1)$ and a spatial operator given by $A(\mu):=-\Delta +\mu_1(x-0.5)\nabla+\mu_2I$ with a homogenous Dirichlet boundary at $x=0$ and a homogenous Neumann boundary at $x=1$. The operator $a(\cdot,\cdot;\mu)$ will be a discretization of the operator $A(\mu)$ written as a bilinear form. For the parameter domain we choose $\mathcal D:=[0,30]\times[-0.4,2]\subset\mathbb R^2$.  We will consider an equidistant piecewise-linear finite-elements discretization of $\Omega$ with 180 degrees of freedom.   For the norm on the finite-element space $X$ we choose $\|v\|_X^2=a(v,v;\bar\mu)$, with $\bar\mu=[0,0]^T$.

The first scenario that we consider is $\mu_2=0$. For $\mu_1\geq0$, $\alpha(\mu)$ is affine in $\mu_1$, with $\alpha([0,0]^T)=1$ and $\alpha([12.0908,0]^T)=0$.  We next define two symmetric bilinear operators $p^1(\cdot,\cdot)$ and $p^2(\cdot,\cdot)$ such that
\begin{equation}
p^1(T_{[20,0]^T} v,w)+p^1(v,T_{[20,0]^T} w)=2a(v,w;\bar\mu)=p^2(T_{[28.25,0]^T} v,w)+p^2(v,T_{[28.25,0]^T} w)
\end{equation}
for all $v,w\in X$. This is motivated by the fact that $a(\cdot,\cdot;\bar\mu)$ was used to define our norm $\|\cdot\|_X$. We can then define $\phi_1(\cdot,\cdot;\mu)$ and $\phi_2(\cdot,\cdot;\mu)$ and the associated coercivity constants $\alpha_{\phi1}(\mu)$ and $\alpha_{\phi2}(\mu)$ based on $p^1(\cdot,\cdot)$ and $p^2(\cdot,\cdot)$. This process is  described in more detail by O'Connor \cite{OConnor2016}. The coercivity constants $\alpha(\mu)$, $\alpha_{\phi1}(\mu)$, and $\alpha_{\phi2}(\mu)$ are shown in figure \ref{pics:a} for $\mu_2=0$. Since for all $\mu_1\in[0,30]$ at least one of the coercivity constants is positive we can conclude that the system is Lyapunov stable for $\mu_2=0$ and all $\mu_1\in[0,30]$.  Figure \ref{pics:b} shows that the system is also Lyapunov stable for the second scenario $\mu_2=2$. An important difference between figures \ref{pics:a} and \ref{pics:b} is that for $\mu_2=0$ we can apply theorem \ref{thm:linear}, which explains why figure \ref{pics:a} contains only straight line segments.  Considering both figures \ref{pics:a} and \ref{pics:b}  and making use of corollary \ref{cor:convex_hull} we can conclude that $a(\cdot,\cdot;\mu)$ is coercive for all $\mu\in\conv(\{[0,0]^T,[0,2]^T,[12,0]^T,[17,2]^T\})$.  Similarly, we can also prove the coercivity of $\phi_1(\cdot,\cdot;\mu)$ and $\phi_2(\cdot,\cdot;\mu)$ over certain convex sets. In that way we can prove the Lyapunov stability of the system for all $\mu\in[0,30]\times[0,2]$. Figure \ref{pics:c} shows the coercivity constants for the third scenario $\mu_2=-0.4$.  In that case $\alpha(\mu)$, $\alpha_{\phi1}(\mu)$, and $\alpha_{\phi2}(\mu)$ are not sufficient to prove Lyapunov stability for all $\mu_1\in[0,30]$.

\begin{figure}
 \centering
 \subfigure[]
 {
  \includegraphics[width=4.7cm,trim=1cm 0cm 1cm 0cm]{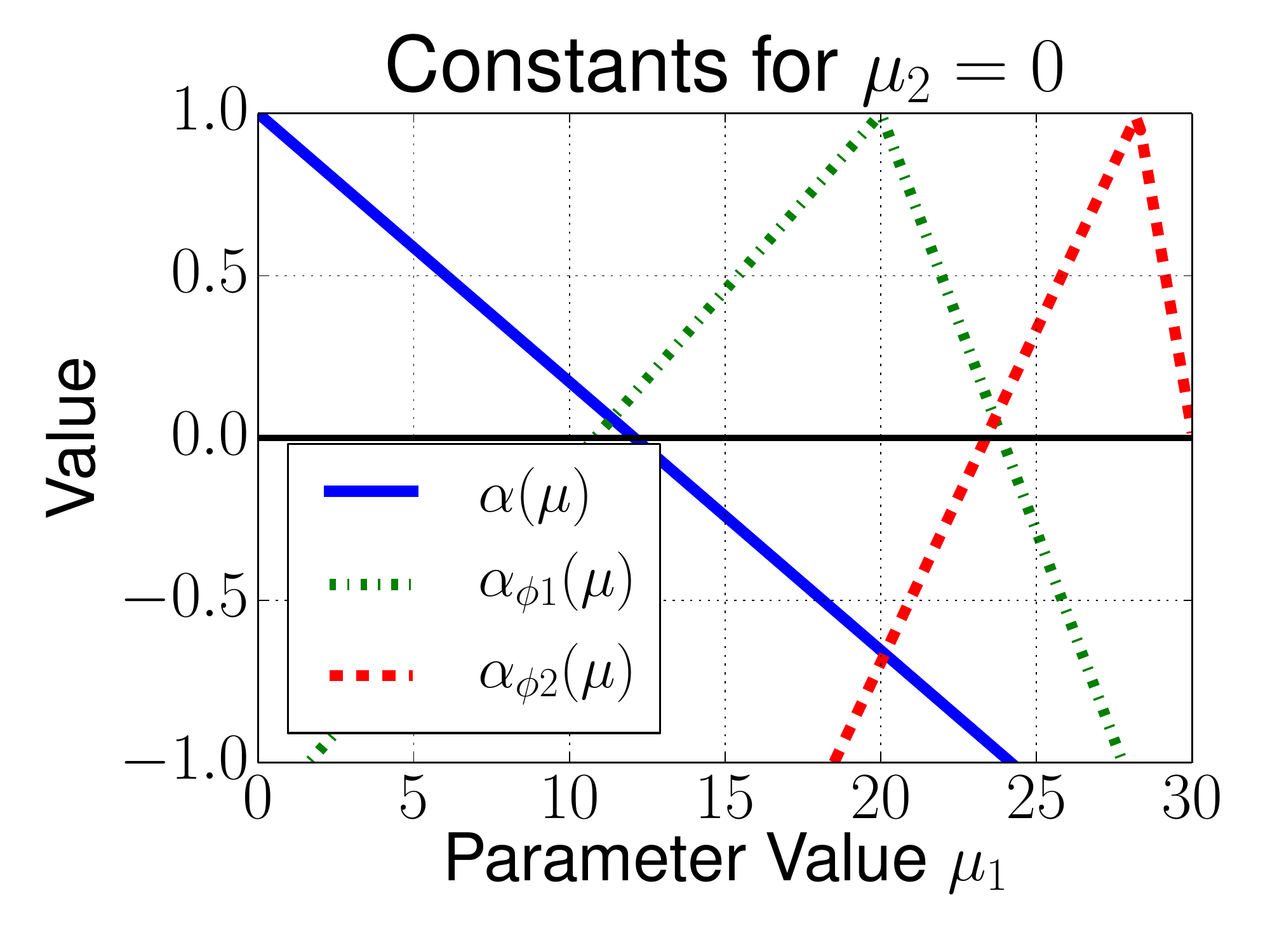}
   \label{pics:a}
   }
 \subfigure[]
 {
  \includegraphics[width=4.7cm,trim=1cm 0cm 1cm 0cm]{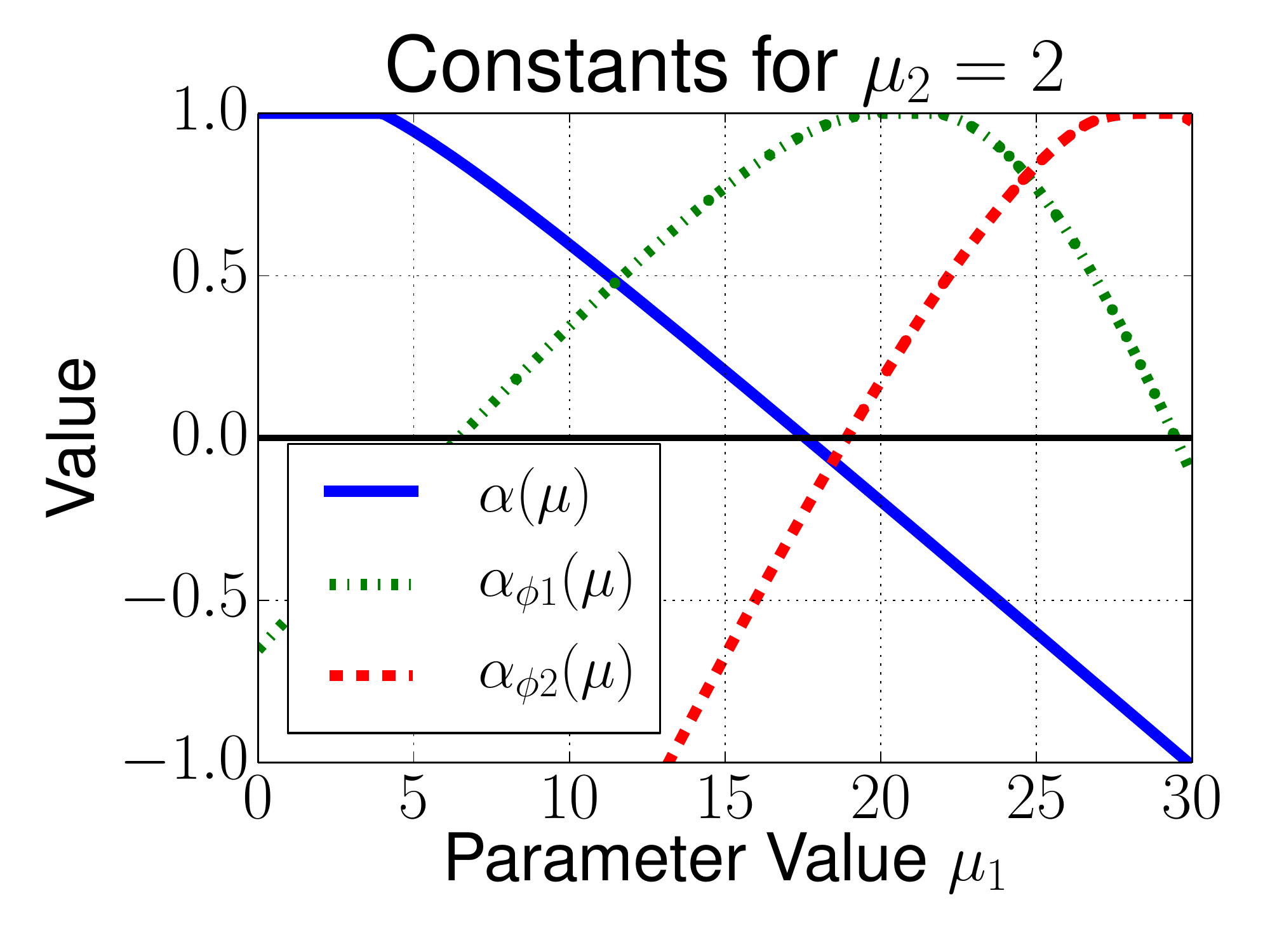}
   \label{pics:b}
   }
   \subfigure[]
 {
  \includegraphics[width=4.7cm,trim=1cm 0cm 1cm 0cm]{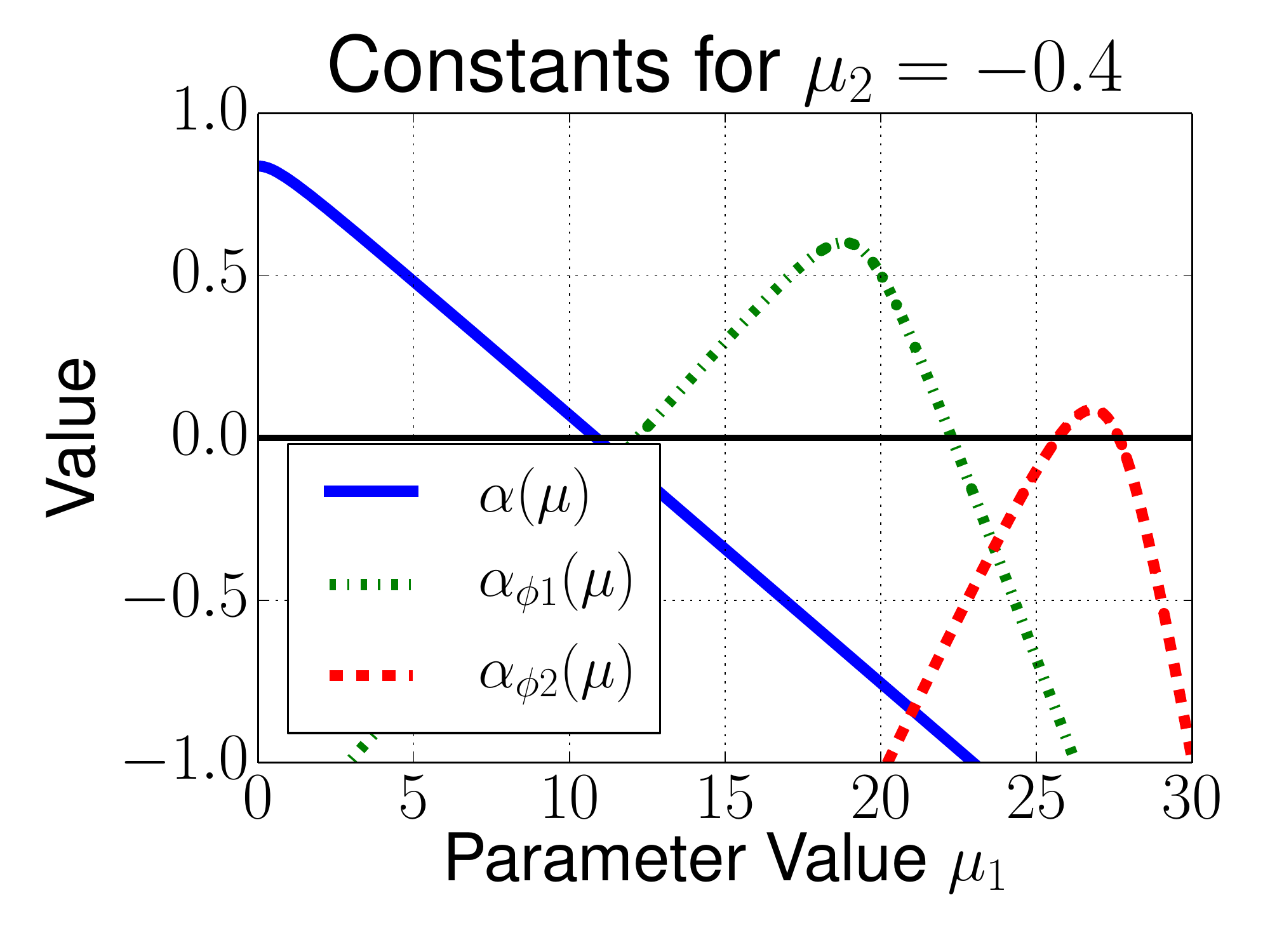}
   \label{pics:c}
   }
\vspace{-.3cm}

 \label{pics}
 \caption{Coercivity constants as a function of $\mu_1$ for three different values of $\mu_2$.}\vspace{-.2cm}
\caption*{Constantes de stabilit\'e en fonction de $\mu_1$ pour trois valeurs de $\mu_2$.}
\vspace{-.1cm}
\end{figure}



\end{document}